\documentclass[11pt]{article}
\usepackage{amsmath,amssymb,amsthm,mathtools,hyperref,geometry}
\newtheorem{theorem}{Theorem}
\newtheorem{lemma}[theorem]{Lemma}
\newtheorem{proposition}[theorem]{Proposition}
\theoremstyle{remark}
\newtheorem{remark}[theorem]{Remark}

\title{Proof of Conjecture 19 of Ballantine, Beck, Merca, and Sagan on Elementary Symmetric Partitions}
\author{Arnav Garg\\
\small Birla Institute of Technology and Science Pilani}
\date{}

\begin{document}
\maketitle

\begin{abstract}
Ballantine, Beck, Merca, and Sagan \cite{BBMS} conjectured four identities, collectively Conjecture~19, relating the image of the map $\mathrm{pre}_k$ on integer partitions to four OEIS sequences. We prove parts~(i) and~(iii) unconditionally, prove part~(iv) unconditionally using the injectivity of $\mathrm{pre}_2$ on partitions of~$n$ (which was Conjecture~1 of the same paper and was proved by Li~\cite{Li}), and show that this injectivity is in fact equivalent to part~(iv). For part~(ii) we prove the partition-theoretic half unconditionally and reduce the remaining content to a 2006 conjecture of Dean Hickerson on the OEIS concerning Huffman coding. We also correct a sign error in the published statement of part~(iii): the correct identity is $\chi(\mathrm{ImP}_3(n)) = A213213(n) - 1$, not $1 + A213213(n)$ as stated.
\end{abstract}

\section{Notation and setup}

We recall the necessary definitions. A \emph{partition} of a non-negative integer $n$ is a weakly decreasing sequence $\lambda = (\lambda_1 \geq \lambda_2 \geq \cdots \geq \lambda_\ell)$ of positive integers summing to $n$. We call $\lambda_1, \dots, \lambda_\ell$ the \emph{parts} of $\lambda$ and $\ell(\lambda) \coloneqq \ell$ the \emph{length} (number of parts). We write $|\lambda| = n$ and say $\lambda \vdash n$. The empty sequence is the unique partition of $0$.

For indeterminates $x_1, \dots, x_m$ and an integer $k \geq 1$, the \emph{$k$-th elementary symmetric polynomial} is
\[
e_k(x_1, \dots, x_m) = \sum_{1 \leq i_1 < i_2 < \cdots < i_k \leq m} x_{i_1} x_{i_2} \cdots x_{i_k}.
\]

Given a partition $\lambda = (\lambda_1, \dots, \lambda_\ell)$ of $n$ with $\ell \geq k$, we define $\mathrm{pre}_k(\lambda)$ to be the partition whose multiset of parts consists of all products $\lambda_{i_1} \cdots \lambda_{i_k}$ over all index-sets $1 \leq i_1 < \cdots < i_k \leq \ell$; equivalently, the parts of $\mathrm{pre}_k(\lambda)$ are the summands of $e_k(\lambda_1, \dots, \lambda_\ell)$, listed in weakly decreasing order. Note that $\mathrm{pre}_k(\lambda) \vdash e_k(\lambda_1,\dots,\lambda_\ell)$ and has $\binom{\ell(\lambda)}{k}$ parts (counted with multiplicity).

We let $\mathcal{P}_k(n)$ denote the set of all partitions of $n$ with at least $k$ parts, and $\mathcal{B}_k(n)$ the subset of $\mathcal{P}_k(n)$ consisting of partitions whose parts are all powers of $2$ (binary partitions). The images under $\mathrm{pre}_k$ are
\[
\mathrm{ImP}_k(n) \coloneqq \mathrm{pre}_k\!\left(\mathcal{P}_k(n)\right), \qquad
\mathrm{ImB}_k(n) \coloneqq \mathrm{pre}_k\!\left(\mathcal{B}_k(n)\right).
\]

For any collection $S$ of partitions, we write:
\begin{itemize}
\item $\chi(S)$ for the number of \emph{distinct values} that appear as a part of some partition in $S$;
\item $\tau(S)$ for the \emph{total number of parts} across all partitions in $S$, counted with multiplicity (so each $\mu \in S$ contributes $\ell(\mu)$ to $\tau(S)$).
\end{itemize}

The conjecture we are proving is the following.

\begin{theorem}[Conjecture 19 of \cite{BBMS}, with a correction in part (iii)]\label{thm:main}
For all $n \geq 0$:
\begin{enumerate}
\item[(i)] $\chi(\mathrm{ImP}_2(n)) = A227800(n+1)$.
\item[(ii)] $\chi(\mathrm{ImB}_2(n)) = A126236(n)$, or equivalently $\lfloor \log_2 n \rfloor + \lfloor \log_2(2n/3) \rfloor$ for $n \geq 2$.
\item[(iii)] $\chi(\mathrm{ImP}_3(n)) = A213213(n) - 1$. The paper states this as $1 + A213213(n)$, which is off by $2$; the correct identity is proved below.
\item[(iv)] $\tau(\mathrm{ImP}_2(n)) = A258472(n)$.
\end{enumerate}
\end{theorem}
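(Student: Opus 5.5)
The theorem splits into four essentially independent counting problems, and I would prove each part by giving an explicit description of the relevant set of values or parts and then matching it with the defining description of the OEIS sequence.

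\textbf{Part (i).} The set of parts appearing in $\mathrm{ImP}_2(n)$ is exactly the set of products $ab$ with $a,b\ge 1$ and $a+b\le n$. Indeed, any two parts $a,b$ of a partition of $n$ satisfy $a+b\le n$. Conversely, if $a+b\le n$ then $(a,b,1^{n-a-b})$ is a partition of $n$ with at least two parts, and $\mathrm{pre}_2$ of it contains the part $ab$. Thus $\chi(\mathrm{ImP}_2(n))$ is the number of distinct products $xy$ with $x+y\le n$. I would then check, from its OEIS definition, that $A227800(n+1)$ counts exactly these products, allowing for the index shift.

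\textbf{Part (iii).} The same argument shows that the parts of $\mathrm{ImP}_3(n)$ are exactly the products $abc$ with $a+b+c\le n$, using the padding partition $(a,b,c,1^{n-a-b-c})$. I would match this set to $A213213$ and track the offset carefully, since this is where the claimed correction $A213213(n)-1$ must come from. I would also verify small values of $n$ by hand to confirm the corrected constant.

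\textbf{Part (ii).} Here the parts are $2^{i+j}$ whenever $2^i$ and $2^j$ can both occur as parts of a binary partition of $n$, that is, whenever $2^i+2^j\le n$. So $\chi(\mathrm{ImB}_2(n))$ is the number of exponents $s=i+j$ that are realizable with $2^i+2^j\le n$. For a fixed $s$, the sum $2^i+2^j$ is minimized at the balanced split. If $s=2t$ is even, $s$ is realizable iff $2^{t+1}\le n$. If $s=2t+1$ is odd, it is realizable iff $3\cdot 2^t\le n$. Counting even $s$ gives $\lfloor\log_2 n\rfloor$ values and counting odd $s$ gives $\lfloor\log_2(2n/3)\rfloor$ values, which proves the closed form for $n\ge 2$. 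The remaining claim that this equals $A126236(n)$, the Huffman-coding sequence, is not partition-theoretic. I would only reduce it to Hickerson's conjecture and not try to prove it; this is the expected obstruction to a fully unconditional statement.

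\textbf{Part (iv).} Each $\lambda\in\mathcal P_2(n)$ contributes $\binom{\ell(\lambda)}{2}$ parts, provided distinct $\lambda$ give distinct images. This is exactly Li's theorem that $\mathrm{pre}_2$ is injective on partitions of $n$. Granting it, I get $\tau(\mathrm{ImP}_2(n))=\sum_{\lambda\vdash n}\binom{\ell(\lambda)}{2}$, and I would identify this sum with $A258472(n)$ through a generating-function check: differentiate $\prod_i(1-zq^i)^{-1}$ twice in $z$ and set $z=1$.

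For the converse equivalence, suppose injectivity fails. Then two partitions share an image, so the set $\mathrm{ImP}_2(n)$ loses at least one element and $\tau$ drops strictly below the sum. Hence the identity in part (iv) forces injectivity.

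The main obstacle is the bookkeeping with the OEIS definitions and offsets in parts (i) and (iii), especially pinning down the corrected constant. The Huffman-coding identification in part (ii) is left conditional rather than proved.
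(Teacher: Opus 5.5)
Your plan is correct and rests on the same foundation as the paper. That foundation has four pieces: the padding-by-$1$'s characterization of which products occur (Proposition~\ref{prop:char}), direct matching with the OEIS definitions for (i) and (iii), reducing the Huffman half of (ii) to Hickerson's conjecture, and Li's injectivity for (iv), with the same strict-drop argument for the converse.

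The one genuinely different step is the closed form in (ii). The paper first shows that the exponent set $S(n)$ is downward closed. It then finds the maximum from the cases $i=j$ and $i=j-1$, and reconciles $\max(2a,2b-1)+1=a+b+1$ using $a\le b\le a+1$. You instead decide each $s$ separately via the balanced split (convexity of $i\mapsto 2^i+2^{s-i}$), and count even and odd $s$ independently. This needs neither the interval lemma nor the case reconciliation. It also explains the shape of the answer: $\lfloor\log_2 n\rfloor$ counts the realizable even exponents and $\lfloor\log_2(2n/3)\rfloor$ counts the odd ones. It makes the paper's increment description of Hickerson's conjecture immediate too, since $2t$ first becomes realizable at $n=2^{t+1}$ and $2t+1$ at $n=3\cdot 2^t$. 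In exchange, the paper's route explicitly exhibits the image values as the full set $\{2^0,2^1,\dots,2^{M(n)}\}$.

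Two things to settle when you write it up. First, in (iii) the $-1$ does not come from an index offset. A213213 ranges over \emph{nonnegative} triples, so it always counts the product $0$, and discarding that value is the whole correction. The analogous point in (i) is that the shift to $n+1$ absorbs the strict inequality $p+q<N$ in the definition of A227800. Second, in (iv) the generating function you want is $\tfrac12\,\frac{\partial^2}{\partial z^2}\prod_{i\ge1}(1-zq^i)^{-1}$ evaluated at $z=1$. The factor $\tfrac12$ is needed to get $\binom{\ell(\lambda)}{2}$ rather than $\ell(\lambda)(\ell(\lambda)-1)$. Your generating-function check does have an advantage over the paper's verbal identification with ``two parts of the second sort'': it makes explicit that the two marks go on index positions, so that, for instance, $(1,1,1)$ contributes $3$ rather than $1$.
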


\section{The main idea}

Everything comes down to one simple observation: a product $p_1 \cdots p_k$ of positive integers appears as a part of some partition in $\mathrm{ImP}_k(n)$ if and only if $p_1 + \cdots + p_k \leq n$. We state this precisely.

\begin{lemma}\label{lem:real}
Let $n, k \geq 1$. Suppose $p_1, \dots, p_k$ are positive integers with $p_1 + \cdots + p_k \leq n$. Then there is a partition $\lambda$ of $n$ with at least $k$ parts such that $p_1, \dots, p_k$ all appear as parts of $\lambda$ (at distinct index positions). In particular, the product $p_1 \cdots p_k$ appears as a part of $\mathrm{pre}_k(\lambda)$.
\end{lemma}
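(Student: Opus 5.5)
The construction is direct: pad the multiset $\{p_1,\dots,p_k\}$ with ones. Set $r \coloneqq n - (p_1+\cdots+p_k)$, which is a non-negative integer by hypothesis. Let $\lambda$ be the partition whose multiset of parts consists of $p_1,\dots,p_k$ together with $r$ copies of $1$, sorted into weakly decreasing order. I will verify three things: that this is a partition, that it has the right size, and that it has enough parts.

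First, every part is a positive integer, so after sorting $\lambda$ is a genuine partition. Its size is
\[
|\lambda| = p_1+\cdots+p_k + r = n .
\]
Its length is $\ell(\lambda) = k + r \geq k$, so $\lambda \in \mathcal{P}_k(n)$.

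Next, the $p_j$ occupy distinct index positions. Sorting a multiset is a bijection between the list $(p_1,\dots,p_k,1,\dots,1)$ and the positions $1,\dots,\ell$. So there are distinct indices $i_1,\dots,i_k$ (after relabelling so that $i_1<\cdots<i_k$) with $\{\lambda_{i_1},\dots,\lambda_{i_k}\} = \{p_1,\dots,p_k\}$ as multisets. This holds even when several $p_j$ coincide with each other or with the padding ones, because we track positions rather than values.

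Finally, by the definition of $\mathrm{pre}_k$, the index set $\{i_1<\cdots<i_k\}$ contributes the summand $\lambda_{i_1}\cdots\lambda_{i_k}$ to $e_k(\lambda_1,\dots,\lambda_\ell)$. That summand equals $p_1\cdots p_k$, so it is a part of $\mathrm{pre}_k(\lambda)$.

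There is no real obstacle here. The only care needed is the bookkeeping of multiplicities when the $p_j$ repeat or equal $1$, which the index-position argument above handles.
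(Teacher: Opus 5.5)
Your proof is correct and follows the paper's own argument: pad $p_1,\dots,p_k$ with $n-(p_1+\cdots+p_k)$ ones, sort to get $\lambda \in \mathcal{P}_k(n)$, and note that the $k$ given values occupy distinct index positions, so their product is a summand of $e_k(\lambda)$. Your remark that the sorting permutation tracks positions rather than values spells out the multiplicity bookkeeping a little more explicitly than the paper does.
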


\begin{proof}
Let $s = p_1 + \cdots + p_k$. We have $s \leq n$ by assumption. Define
\[
\lambda = \mathrm{sort}\!\left(p_1, \dots, p_k, \underbrace{1, 1, \dots, 1}_{n - s}\right).
\]
This is a partition of $n$ and it has $k + (n - s) \geq k$ parts. The $k$ values $p_1, \dots, p_k$ sit at $k$ distinct index positions in this partition (even if some of them happen to be equal as integers), so their product $p_1 \cdots p_k$ is one of the summands of $e_k(\lambda)$, and hence a part of $\mathrm{pre}_k(\lambda)$.
\end{proof}

The other direction is immediate: every part of $\mathrm{pre}_k(\lambda)$ is a product of $k$ distinct-position parts of $\lambda$, so their sum is at most $|\lambda| = n$. Putting the two directions together gives the following complete description of which values appear as parts in the image.

\begin{proposition}\label{prop:char}
For all $n, k \geq 1$,
\[
\bigcup_{\mu \in \mathrm{ImP}_k(n)} \{\text{parts of } \mu\}
= \bigl\{ p_1 \cdots p_k \mid p_1, \dots, p_k \geq 1,\ p_1 + \cdots + p_k \leq n \bigr\}.
\]
So $\chi(\mathrm{ImP}_k(n))$ counts how many distinct products $p_1 \cdots p_k$ one can form from positive integers summing to at most $n$.

For the binary case, each $p_i$ must be a power of $2$, say $p_i = 2^{a_i}$ with $a_i \geq 0$. Since $2^{a_1} \cdots 2^{a_k} = 2^{a_1 + \cdots + a_k}$, the product value is determined entirely by the sum of exponents, so
\[
\chi(\mathrm{ImB}_k(n)) = \bigl|\bigl\{ a_1 + \cdots + a_k \mid a_i \geq 0,\ 2^{a_1} + \cdots + 2^{a_k} \leq n \bigr\}\bigr|.
\]
\end{proposition}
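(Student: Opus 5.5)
The set equality in Proposition~\ref{prop:char} will be proved by double inclusion, and the binary formula will then follow by specialising the same argument to $\mathcal{B}_k(n)$.

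For the inclusion ``$\supseteq$'', take positive integers $p_1,\dots,p_k$ with $p_1+\cdots+p_k\le n$. Lemma~\ref{lem:real} gives a partition $\lambda\in\mathcal{P}_k(n)$ in which $p_1,\dots,p_k$ occupy distinct index positions. Hence $p_1\cdots p_k$ is a summand of $e_k(\lambda)$, that is, a part of $\mathrm{pre}_k(\lambda)\in\mathrm{ImP}_k(n)$.

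For the inclusion ``$\subseteq$'', let $\mu=\mathrm{pre}_k(\lambda)$ with $\lambda\in\mathcal{P}_k(n)$. Any part of $\mu$ has the form $\lambda_{i_1}\cdots\lambda_{i_k}$ with $i_1<\cdots<i_k$. Set $p_j=\lambda_{i_j}\ge1$. Because the indices are distinct and all parts of $\lambda$ are positive, $\sum_j p_j\le\sum_i\lambda_i=n$. The description of $\chi(\mathrm{ImP}_k(n))$ as the number of distinct products is then just a restatement of the definition of $\chi$ applied to this set.

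For the binary case I would rerun both inclusions inside $\mathcal{B}_k(n)$.
\begin{itemize}
\item[$\subseteq$:] Every part of $\lambda\in\mathcal{B}_k(n)$ is a power of $2$. So each part of $\mathrm{pre}_k(\lambda)$ equals $2^{a_1}\cdots 2^{a_k}$ with $\sum_j 2^{a_j}\le n$.
\item[$\supseteq$:] I need to check that the partition built in Lemma~\ref{lem:real} is binary when every $p_j=2^{a_j}$. This holds because the padding parts are all $1=2^0$, so $\lambda\in\mathcal{B}_k(n)$. This is the only point that needs any care, and it is the main (mild) obstacle.
\end{itemize}
It follows that the set of values appearing as parts of $\mathrm{ImB}_k(n)$ is exactly $\{2^{a_1+\cdots+a_k} : a_i\ge0,\ \sum_j 2^{a_j}\le n\}$. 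Since $a\mapsto 2^a$ is injective, distinct values correspond bijectively to distinct exponent sums $a_1+\cdots+a_k$. This gives the stated formula for $\chi(\mathrm{ImB}_k(n))$.
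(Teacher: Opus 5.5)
Your proof is correct and follows the paper's route: Lemma~\ref{lem:real} gives $\supseteq$, the bound on sums of parts at distinct positions gives $\subseteq$, and injectivity of $a\mapsto 2^a$ gives the binary count. Your explicit check that the padding parts $1=2^0$ keep the constructed partition inside $\mathcal{B}_k(n)$ fills in a step the paper leaves implicit.
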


\section{Proof of part (i)}

The OEIS defines $A227800(N)$ as the number of distinct products $p \cdot q$ with $p, q \geq 1$ and $p + q < N$. Substituting $N = n + 1$ gives
\[
A227800(n+1) = \bigl|\{ p \cdot q \mid p, q \geq 1,\ p + q \leq n \}\bigr|.
\]
By Proposition \ref{prop:char} with $k = 2$, this is exactly $\chi(\mathrm{ImP}_2(n))$. \qed

\section{Proof of part (ii)}

We break this into two pieces. First we prove a closed form for $\chi(\mathrm{ImB}_2(n))$ directly from Proposition \ref{prop:char}. Then we show that connecting this closed form to $A126236$ is equivalent to a 2006 conjecture by Dean Hickerson on OEIS, which remains open.

\subsection*{Step 1: A closed form for $\chi(\mathrm{ImB}_2(n))$}

By Proposition \ref{prop:char}, $\chi(\mathrm{ImB}_2(n))$ counts the number of distinct values of $2^{i+j}$ over all pairs $(i, j)$ with $i, j \geq 0$ and $2^i + 2^j \leq n$. Since distinct exponents give distinct powers of $2$, this equals the number of distinct values $i + j$ can take. Define
\[
S(n) = \{ i + j \mid i, j \geq 0,\ 2^i + 2^j \leq n \}.
\]

\begin{lemma}\label{lem:b2-interval}
$S(n)$ is the set $\{0, 1, \dots, M(n)\}$ where $M(n) = \max S(n)$.
\end{lemma}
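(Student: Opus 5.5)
We show that $S(n)$ is downward closed: whenever $s \in S(n)$ with $s \geq 1$, we also have $s-1 \in S(n)$. Since $S(n)$ is a finite set of non-negative integers, this forces $S(n) = \{0,1,\dots,M(n)\}$. We should first check that $S(n)$ is nonempty so that $M(n)$ is defined. For this we assume $n \geq 2$, where $(i,j)=(0,0)$ gives $2^0+2^0 = 2 \leq n$ and hence $0 \in S(n)$. For $n \leq 1$ the set is empty, which matches the convention that $\mathcal{B}_2(n)$ is empty. We will state this caveat explicitly, or read the lemma as applying to $n \geq 2$, which is the range used in part (ii).

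For the downward step, take $s = i+j \in S(n)$ with $i,j \geq 0$, $s \geq 1$, and $2^i+2^j \leq n$. Since $s \geq 1$, at least one exponent is positive; by symmetry say $i \geq 1$. Then $(i-1, j)$ has non-negative entries, its exponent sum is $s-1$, and
\[
2^{i-1} + 2^j < 2^i + 2^j \leq n .
\]
So $s-1 \in S(n)$. The only point needing care is that we lower a positive exponent, so the new pair stays in the admissible range; this is guaranteed by $s \geq 1$.

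Induction downward from $M(n)$ then shows that every integer in $[0, M(n)]$ lies in $S(n)$. The reverse inclusion holds because all elements of $S(n)$ are non-negative and at most $M(n)$ by definition. I do not expect a real obstacle here; the only subtlety is the nonemptiness/small-$n$ edge case, so that $M(n)$ makes sense.
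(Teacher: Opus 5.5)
Your proof is correct and is essentially the paper's argument: you lower a positive exponent by one to get a witness for $s-1$, then iterate downward from $M(n)$. Your explicit remark that $S(n)$ is empty for $n \le 1$ (so the lemma should be read for $n \ge 2$, where $(0,0)$ witnesses $0 \in S(n)$ and $M(n)$ is defined) is a small edge case that the paper leaves implicit.
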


\begin{proof}
Take any $s \in S(n)$ with $s \geq 1$, witnessed by some pair $(i, j)$ with $j \geq 1$. Then the pair $(i, j-1)$ satisfies $2^i + 2^{j-1} \leq 2^i + 2^j \leq n$, and witnesses $s - 1 \in S(n)$. Repeating this argument step by step covers all values from $0$ to $s$.
\end{proof}

\begin{lemma}\label{lem:b2-max}
For $n \geq 2$, we have $M(n) + 1 = \lfloor \log_2 n \rfloor + \lfloor \log_2(2n/3) \rfloor$.
\end{lemma}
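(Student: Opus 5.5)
The plan is to compute $M(n)$ explicitly in terms of $m \coloneqq \lfloor \log_2 n\rfloor$ and then check that the right-hand side takes the same value. Fix $n \geq 2$, so $m \geq 1$ and $2^m \leq n < 2^{m+1}$. Since $S(n)$ is symmetric in $i$ and $j$, I will assume $i \geq j$ throughout.

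\emph{Upper bound.} First, no pair has $i+j \geq 2m$. If $i \geq m+1$ then $2^i > n$. If $i \leq m$ and $i + j \geq 2m$, then $j \geq m$, hence $i=j=m$ and $2^i+2^j = 2^{m+1} > n$. So $M(n) \leq 2m-1$.

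\emph{Deciding between $2m-1$ and $2m-2$.} The pair $(m-1,m-1)$ gives $2^{m-1}+2^{m-1} = 2^m \leq n$, so $M(n) \geq 2m-2$ always; this uses $m \geq 1$, which is where $n \geq 2$ enters. For $i+j = 2m-1$ with $i \geq j$ we need $i \geq m$, so $i = m$ and $j = m-1$, since $i \geq m+1$ is already excluded. Thus $2m-1 \in S(n)$ exactly when $2^m + 2^{m-1} = 3\cdot 2^{m-1} \leq n$. Combining with Lemma~\ref{lem:b2-interval} only to name the maximum, we get
\[
M(n) = \begin{cases} 2m-1, & n \geq 3\cdot 2^{m-1},\\ 2m-2, & n < 3 \cdot 2^{m-1}. \end{cases}
\]

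\emph{Right-hand side.} From $2^m \leq n < 2^{m+1}$ we get $2^{m-1} < 2^{m+1}/3 \leq 2n/3 < 2^{m+2}/3 < 2^{m+1}$. Hence $\lfloor \log_2(2n/3)\rfloor \in \{m-1, m\}$, and it equals $m$ precisely when $2n/3 \geq 2^m$, i.e.\ when $n \geq 3\cdot 2^{m-1}$. Therefore the right-hand side is $2m$ in the first case and $2m-1$ in the second. In both cases it equals $M(n)+1$, as claimed.

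There is no real obstacle here. The only point requiring care is the exhaustive check that $(m, m-1)$ is the unique candidate pair for the sum $2m-1$, together with the strict inequalities pinning $\lfloor\log_2(2n/3)\rfloor$ to $\{m-1,m\}$.
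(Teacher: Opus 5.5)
Your proof is correct, but it is organized differently from the paper's.

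The paper splits into the cases $i=j$ and $i<j$. It computes a maximum in each case in terms of $a=\lfloor\log_2(n/2)\rfloor$ and $b=\lfloor\log_2(2n/3)\rfloor$, shows $a\le b\le a+1$, and then compares $2a$ with $2b-1$. You instead anchor everything on the single quantity $m=\lfloor\log_2 n\rfloor$. You prove an exhaustive upper bound $M(n)\le 2m-1$, exhibit the witness $(m-1,m-1)$ for $2m-2$, and note that $(m,m-1)$ is the only possible witness for $2m-1$. You then rewrite the right-hand side in terms of $m$, using the same threshold $n\ge 3\cdot 2^{m-1}$.

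Your route buys rigor exactly where the paper is loose. The paper's Case~2 asserts that the maximum over pairs with $i<j$ is $2b-1$, arguing that for fixed $j$ one should take $i=j-1$. This ignores pairs with $i\le j-2$ that remain feasible when $(j-1,j)$ is not. For $n=5$, the pair $(0,2)$ gives $2>2b-1=1$. The paper's final formula $M(n)=\max(2a,2b-1)$ still holds, because any feasible pair with $i\le j-2$ has $2^j<n$, so $j\le a+1$ and $i+j\le 2j-2\le 2a$. That step is never stated, whereas your upper bound covers all pairs at once. In exchange, the paper's case split shows directly that the two floor terms come from the diagonal and near-diagonal pairs.

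One minor point: you do not need Lemma~\ref{lem:b2-interval} at all. The facts $2m-2\in S(n)$ and $\max S(n)\le 2m-1$, together with your criterion for $2m-1\in S(n)$, already determine $M(n)$.
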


\begin{proof}
We may assume $i \leq j$ without loss of generality (since $i + j$ is symmetric). There are two cases.

\textbf{Case 1: $i = j$.} The constraint $2^i + 2^j = 2^{j+1} \leq n$ gives $j \leq \lfloor \log_2(n/2) \rfloor$. The maximum of $i + j = 2j$ in this case is $2\lfloor \log_2(n/2) \rfloor$.

\textbf{Case 2: $i < j$.} For a fixed $j$, the sum $i + j$ is largest when $i$ is as large as possible, which means $i = j - 1$. The constraint then becomes $2^{j-1} + 2^j = 3 \cdot 2^{j-1} \leq n$, giving $j \leq \lfloor \log_2(2n/3) \rfloor$. The maximum of $i + j = 2j - 1$ in this case is $2\lfloor \log_2(2n/3) \rfloor - 1$.

Now set $a = \lfloor \log_2(n/2) \rfloor = \lfloor \log_2 n \rfloor - 1$ and $b = \lfloor \log_2(2n/3) \rfloor$. Since $n/2 < 2n/3 < n$, we get $a \leq b \leq a + 1$. So
\[
M(n) = \max(2a,\ 2b - 1).
\]
If $b = a$: $M(n) = 2a$ and $M(n) + 1 = 2a + 1 = a + b + 1$. If $b = a + 1$: $M(n) = 2b - 1 = 2a + 1$ and $M(n) + 1 = 2a + 2 = a + b + 1$. In both cases, $M(n) + 1 = a + b + 1 = \lfloor \log_2 n \rfloor + \lfloor \log_2(2n/3) \rfloor$.
\end{proof}

\begin{theorem}\label{thm:image-closed}
For $n \geq 2$,
\[
\chi(\mathrm{ImB}_2(n)) = \lfloor \log_2 n \rfloor + \lfloor \log_2(2n/3) \rfloor.
\]
\end{theorem}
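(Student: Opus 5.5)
The plan is to assemble Theorem~\ref{thm:image-closed} from Proposition~\ref{prop:char} and Lemmas~\ref{lem:b2-interval} and~\ref{lem:b2-max}; nothing new needs to be proved.

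First, I specialise Proposition~\ref{prop:char} to $k=2$. This shows that the distinct parts appearing in $\mathrm{ImB}_2(n)$ are exactly the powers $2^{i+j}$ with $i,j\ge 0$ and $2^i+2^j\le n$. Since $s\mapsto 2^s$ is injective, $\chi(\mathrm{ImB}_2(n))=|S(n)|$.

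One point about the binary case deserves care, because Proposition~\ref{prop:char} is stated for $\mathrm{ImP}_k$. The realizing partition of Lemma~\ref{lem:real} pads $2^i,2^j$ with ones. Ones are $2^0$, so that partition is binary and lies in $\mathcal{B}_2(n)$. Conversely, every part of $\mathrm{pre}_2(\lambda)$ with $\lambda\in\mathcal{B}_2(n)$ is $2^i2^j$ for two parts at distinct positions, and these satisfy $2^i+2^j\le n$. I will state this explicitly rather than rely on the proposition's phrasing.

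Next, for $n\ge 2$ the pair $(0,0)$ is admissible, since $1+1\le n$. Hence $S(n)$ is nonempty and $M(n)$ is well defined. Lemma~\ref{lem:b2-interval} then gives $S(n)=\{0,\dots,M(n)\}$, so $|S(n)|=M(n)+1$. Lemma~\ref{lem:b2-max} converts this to $\lfloor\log_2 n\rfloor+\lfloor\log_2(2n/3)\rfloor$, which completes the proof.

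There is no real obstacle. The only step needing attention is the binary adaptation of Lemma~\ref{lem:real} described above. It is also worth checking a small case: at $n=2$ the formula gives $1+0=1$, matching $S(2)=\{0\}$.
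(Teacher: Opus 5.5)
Your proposal is correct and follows the paper's proof exactly. You specialise Proposition~\ref{prop:char} to $k=2$ to get $\chi(\mathrm{ImB}_2(n))=|S(n)|$, then Lemma~\ref{lem:b2-interval} gives $|S(n)|=M(n)+1$ and Lemma~\ref{lem:b2-max} gives the closed form, and your explicit checks that the padding ones in Lemma~\ref{lem:real} are powers of $2$ (so the witness lies in $\mathcal{B}_2(n)$) and that $(0,0)$ makes $S(n)$ nonempty for $n\ge 2$ are details the paper leaves implicit.
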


\begin{proof}
By Proposition \ref{prop:char} and Lemma \ref{lem:b2-interval}, $\chi(\mathrm{ImB}_2(n)) = |S(n)| = M(n) + 1$. Lemma \ref{lem:b2-max} gives the closed form.
\end{proof}

\subsection*{Step 2: Connection to $A126236$}

The sequence $A126236(n)$ is defined on OEIS as the maximum codeword length in the Huffman code built for $n$ symbols where the $k$-th symbol has frequency $k$. In December 2006, Dean Hickerson posted the following conjecture on the OEIS entry, which has been verified for $n \leq 1000$ but remains unproven:
\[
A126236(n) = \lfloor \log_2 n \rfloor + \lfloor \log_2(2n/3) \rfloor.
\]

\begin{theorem}\label{thm:ii-equiv}
Part (ii) of Conjecture 19 in \cite{BBMS} is equivalent to Hickerson's conjecture above.
\end{theorem}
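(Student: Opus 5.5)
The equivalence follows almost immediately from Theorem~\ref{thm:image-closed}, so the plan is to show that both statements assert the same formula. Part~(ii) of Conjecture~19 claims $\chi(\mathrm{ImB}_2(n)) = A126236(n)$ for all $n \geq 0$. Hickerson's conjecture claims $A126236(n) = \lfloor \log_2 n \rfloor + \lfloor \log_2(2n/3) \rfloor$. Theorem~\ref{thm:image-closed} proves unconditionally that $\chi(\mathrm{ImB}_2(n))$ equals this same closed form for every $n \geq 2$. Consequently, for each fixed $n \geq 2$, the identity $\chi(\mathrm{ImB}_2(n)) = A126236(n)$ holds if and only if $A126236(n) = \lfloor \log_2 n \rfloor + \lfloor \log_2(2n/3) \rfloor$. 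Quantifying over all $n \geq 2$ gives the equivalence in both directions.

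The only care needed is at the boundary values $n = 0, 1$, where the logarithmic formula is undefined or degenerate and Theorem~\ref{thm:image-closed} does not apply. I would handle these directly.

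\textbf{Image side.} For $n \leq 1$, the set $\mathcal{B}_2(n)$ is empty, since a partition with at least two parts needs $n \geq 2$. Hence $\chi(\mathrm{ImB}_2(n)) = 0$.

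\textbf{Sequence side.} I would check the OEIS values of $A126236$ at these indices against the convention used in \cite{BBMS}. The Huffman code for one symbol has a single codeword of length $0$ under the usual convention, and $n = 0$ is either $0$ or outside the sequence's offset.

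So part~(ii) at $n \leq 1$ is a finite check, independent of Hickerson's conjecture, and Hickerson's conjecture is stated only for the range where the formula makes sense. The theorem should therefore be read as the equivalence of the two statements for $n \geq 2$, with the finitely many small cases verified separately or excluded by the offset.

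The main obstacle is bookkeeping rather than mathematics: pinning down the offset and the values of $A126236$ at $n = 1$ (and $n = 0$, if defined), and confirming that the formulation of part~(ii) in \cite{BBMS} uses the same indexing. The "equivalently" clause in the statement of part~(ii) in Theorem~\ref{thm:main} already encodes this: by Theorem~\ref{thm:image-closed}, the second form of part~(ii) is now a theorem. That leaves exactly Hickerson's identity as the remaining content, which is why the conjecture can be reduced to it but not settled here.
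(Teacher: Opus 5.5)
Your argument is correct and is essentially the paper's proof: substitute the unconditional closed form of Theorem~\ref{thm:image-closed} into part~(ii), so that $\chi(\mathrm{ImB}_2(n)) = A126236(n)$ holds for a given $n \geq 2$ exactly when Hickerson's identity holds there. Your separate treatment of $n \leq 1$ is a sensible refinement that the paper leaves implicit by restricting to $n \geq 2$. There $\mathcal{B}_2(n) = \emptyset$, and at $n=1$ the closed form gives $-1$ rather than $0$.
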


\begin{proof}
Theorem \ref{thm:image-closed} gives $\chi(\mathrm{ImB}_2(n)) = \lfloor \log_2 n \rfloor + \lfloor \log_2(2n/3) \rfloor$. So $\chi(\mathrm{ImB}_2(n)) = A126236(n)$ holds if and only if $A126236(n) = \lfloor \log_2 n \rfloor + \lfloor \log_2(2n/3) \rfloor$, which is exactly Hickerson's conjecture.
\end{proof}

\subsection*{What Hickerson's conjecture says}

Let $H(n)$ denote the maximum codeword length. Hickerson's conjecture is equivalent to saying: $H(2) = 1$, and for $n \geq 3$, the value $H(n) - H(n-1)$ is either $0$ or $1$, with $H(n) = H(n-1) + 1$ happening exactly when $n$ is a power of $2$ or three times a power of $2$.

To see the equivalence: the set of $m \geq 2$ where $\lfloor \log_2 m \rfloor$ increases is $\{2, 4, 8, 16, \dots\}$, and the set where $\lfloor \log_2(2m/3) \rfloor$ increases is $\{3, 6, 12, 24, \dots\}$. These two sets are disjoint, and together they are exactly the powers of $2$ and three times powers of $2$. Counting how many such values lie in $[2, n]$ gives $\lfloor \log_2 n \rfloor + \lfloor \log_2(2n/3) \rfloor$.

We independently re-verified Hickerson's conjecture by direct Huffman simulation for all $n \leq 1000$.

\subsection*{Conclusion for part (ii)}

Theorem \ref{thm:image-closed} is proven without any assumptions. The remaining gap, connecting our closed form to $A126236$, reduces to Hickerson's 2006 conjecture, which is verified computationally up to $n = 1000$ but not yet proven in full generality. \qed

\section{Proof of part (iii), with a correction}

The OEIS defines $A213213(n)$ as the number of distinct products $i \cdot j \cdot k$ over all triples of non-negative integers with $i + j + k \leq n$. This includes the product $0$ (for example take $i = 0$). Separating out the zero product,
\[
A213213(n) = 1 + \bigl|\{ i \cdot j \cdot k \mid i, j, k \geq 1,\ i + j + k \leq n \}\bigr|.
\]
The second term counts products of three positive integers summing to at most $n$, which by Proposition \ref{prop:char} with $k = 3$ is exactly $\chi(\mathrm{ImP}_3(n))$. So
\[
\chi(\mathrm{ImP}_3(n)) = A213213(n) - 1.
\]
\qed

\begin{remark}
The published conjecture in \cite{BBMS} states $\chi(\mathrm{ImP}_3(n)) = 1 + A213213(n)$, which is wrong by $2$ for every $n \geq 0$. For example, $\chi(\mathrm{ImP}_3(3)) = 1$ and $A213213(3) = 2$, so the published form gives $3$ while the correct answer is $1$. The error is a sign flip: the zero product should be subtracted from $A213213$, not added. The correct identity is $\chi(\mathrm{ImP}_3(n)) = A213213(n) - 1$, verified against OEIS b-file values for all $n \leq 35$.
\end{remark}

\section{Proof of part (iv)}

The key input is the injectivity of $\mathrm{pre}_2$ on $\mathcal{P}_2(n)$, which was Conjecture~1 of \cite{BBMS} and was proved by Li \cite{Li}. We use it as an established theorem.

Since $\mathrm{pre}_2$ is injective on $\mathcal{P}_2(n)$, each partition $\mu \in \mathrm{ImP}_2(n)$ has a unique preimage $\lambda \in \mathcal{P}_2(n)$. The map $\mathrm{pre}_2$ produces one part for each pair of index positions in $\lambda$, so $\ell(\mu) = \binom{\ell(\lambda)}{2}$. Therefore
\[
\tau(\mathrm{ImP}_2(n)) = \sum_{\lambda \in \mathcal{P}_2(n)} \binom{\ell(\lambda)}{2} = \sum_{\lambda \vdash n} \binom{\ell(\lambda)}{2},
\]
where the last equality holds because $\binom{1}{2} = 0$, so partitions of length $1$ contribute nothing. The right-hand side counts all pairs $(\lambda, T)$ where $\lambda \vdash n$ and $T$ is an unordered pair of index positions in $\lambda$. This is the same as choosing a partition of $n$ and marking exactly two of its parts as being of a ``second sort.'' The OEIS defines $A258472(n)$ as exactly this count: the number of partitions of $n$ into two sorts of parts with exactly two parts of the second sort. So $\tau(\mathrm{ImP}_2(n)) = A258472(n)$. \qed

\begin{remark}
The identity $\tau(\mathrm{ImP}_2(n)) = A258472(n)$ is in fact \emph{equivalent} to the injectivity of $\mathrm{pre}_2$ on $\mathcal{P}_2(n)$. If two distinct partitions $\lambda \neq \lambda'$ mapped to the same $\mu$, then the left side would be strictly less than the right side (since $\mu$ is counted only once on the left, while both $\binom{\ell(\lambda)}{2}$ and $\binom{\ell(\lambda')}{2}$ appear on the right). Conversely, injectivity makes the correspondence bijective and the equality holds, as shown above. Together with Li's proof \cite{Li} of the injectivity, this gives an independent proof path: $\tau(\mathrm{ImP}_2(n)) = A258472(n)$ could also have been used to establish the injectivity.
\end{remark}

\section{Summary}

\begin{center}
\renewcommand{\arraystretch}{1.4}
\begin{tabular}{|l|p{0.58\linewidth}|}
\hline
Part & Status \\
\hline
(i) $\chi(\mathrm{ImP}_2(n)) = A227800(n+1)$ & Proven unconditionally from Proposition \ref{prop:char}. \\
(ii) $\chi(\mathrm{ImB}_2(n)) = A126236(n)$ & The partition side is proven (Theorem \ref{thm:image-closed}). The remaining step is Hickerson's 2006 OEIS conjecture, verified to $n = 1000$. \\
(iii) $\chi(\mathrm{ImP}_3(n)) = A213213(n) - 1$ & Proven unconditionally. The published sign is a typo. \\
(iv) $\tau(\mathrm{ImP}_2(n)) = A258472(n)$ & Proven unconditionally, using the injectivity of $\mathrm{pre}_2$ proved in \cite{Li}. Equivalent to that injectivity. \\
\hline
\end{tabular}
\end{center}

\end{document}